\DeclareFontFamily{U}{euf}{}
\DeclareFontShape{U}{euf}{m}{n}{%
  <5><6><7><8><9>gen*eufm%
  <10><10.95><12><14.4><17.28><20.74><24.88>eufm10%
  }{}
\DeclareFontShape{U}{euf}{b}{n}{%
  <5><6><7><8><9>gen*eufb%
  <10><10.95><12><14.4><17.28><20.74><24.88>eufb10%
  }{}
\DeclareFontFamily{U}{msb}{}
\DeclareFontShape{U}{msb}{m}{n}{%
  <5><6><7><8><9>gen*msbm%
  <10><10.95><12><14.4><17.28><20.74><24.88>msbm10%
  }{}
\DeclareFontFamily{U}{msa}{}
\DeclareFontShape{U}{msa}{m}{n}{%
  <5><6><7><8><9>gen*msam%
  <10><10.95><12><14.4><17.28><20.74><24.88>msam10%
  }{}
\newtheorem{theorem}{Theorem}[section]
\newtheorem{lemma}[theorem]{Lemma}
\theoremstyle{definition}
\numberwithin{equation}{section} \frenchspacing
\def\C{\mathcal C}
\begin{document}

\title[Approximations by multiple cosine and sine functions]
{Approximations by special values of multiple cosine and sine functions}

\author{Su Hu}
\address{Department of Mathematics, South China University of Technology, Guangzhou, Guangdong 510640, China}
\email{mahusu@scut.edu.cn}

\author{Min-Soo Kim}
\address{Department of Mathematics Education, Kyungnam University, Changwon, Gyeongnam 51767, Republic of Korea}
\email{mskim@kyungnam.ac.kr}

\subjclass[2010]{42A10, 41A25, 41A50, 11M06}
\keywords{Multiple cosine and sine functions, Zeta value, Trigonometric integral, Approximation}

\begin{abstract}
Kurokawa and Koyama's multiple cosine function $\C_r(x)$ and Kurokawa's multiple sine function $S_{r}(x)$
are   generalizations of the classical cosine and sine functions from their infinite product representations, respectively.
For any fixed $x\in[0,\frac{1}{2})$, let  
 $$B=\left\{\frac{\log\C_r(x)}{\pi}~~\bigg|~~r=1,2,3,\ldots\right\}$$
 and $$C=\left\{\frac{\log S_r(x)}{\pi}~~\bigg|~~r=1,2,3,\ldots\right\}$$
 be the sets of special values of $\C_r(x)$ and $S_{r}(x)$ at $x$, respectively.
 
 In this paper, we will show that the real numbers can be strongly approximated by
linear combinations of elements in $B$ and $C$ respectively,  with rational coefficients. Furthermore, let
$$D=\left\{\frac{\zeta_{E}(3)}{\pi^2},\frac{\zeta_{E}(5)}{\pi^4}, \ldots, \frac{\zeta_{E}(2k+1)}{\pi^{2k}},\ldots; \frac{\beta(4)}{\pi^3},\frac{\beta(6)}{\pi^5}, \ldots, \frac{\beta(2k+2)}{\pi^{2k+1}},\ldots\right\}$$
be the set of special values of Dirichlet's eta and beta functions. We will prove that the set $D$ has a similar approximation property, where the coefficients are values of the derivatives of rational polynomials. 
Our approaches are inspired by recent works of Alkan \cite{Alkan} and Lupu-Wu \cite{LW}  as applications of the trigonometric integrals.
 \end{abstract}

\maketitle

\section{Introduction}
The Riemann zeta function is defined by
\begin{equation}~\label{Ri-zeta}
\zeta(s)=\sum_{n=1}^{\infty}\frac{1}{n^{s}},
\end{equation}
for Re$(s)>1$.
Let $$A=\{\zeta(3),\zeta(5), \ldots, \zeta(2k+1),\ldots\}$$ be the set of odd zeta values.
For $x\in\mathbb{R}$, let $\lfloor x\rfloor =\max\{m\in \mathbb {Z} \mid m\leq x\}$ and $\lceil x\rceil =\min\{m\in \mathbb {Z} \mid m\geq x\}$. 
We have the following formula connecting the trigonometric integral  $\int_{0}^{2\pi}t^{r}\log\left(2\sin\frac{t}{2}\right)dt$ and the elements in $A$:
\begin{equation}\label{(1)}
\int_{0}^{2\pi}t^{r}\log\left(2\sin\frac{t}{2}\right)dt=\sum_{k=1}^{\lfloor r/2\rfloor}(-1)^{k}\frac{r!(2\pi)^{r-2k+1}}{(r-2k+1)!}\zeta(2k+1),
\end{equation}
where $r=2,3,4,\ldots$ (see \cite[p. 3748, (2.9)]{Alkan}).
From the integral (\ref{(1)}), Alkan \cite[Theorem 1]{Alkan} proved that the real numbers can be strongly approximated by  linear combinations
of elements in $A$ with coefficients in $\mathbb{Q}[\pi^{s}:1\leq s\leq r-2k+1]$, where $\mathbb{Q}[\pi^{s}:1\leq s\leq n]$ is
the $\mathbb{Q}$-vector space with basis $\pi^{s}$ for $1\leq s\leq n$. His result is an analogy of the classical Diophantine approximation of
Liouville numbers by rational numbers, that is, a real number $\alpha$ is named a Liouville number, if for every positive integer $n$, there exists a pair of integers $(m,n)$ with $n>1$ such that
$$0<\left|\alpha-\frac{m}{n}\right|<\frac{1}{n^{q}}.$$ 
It is also a reminiscent of a recent result by Brown and Zudilin \cite{BZ}, which
constructed infinitely many \textit{effective} rational approximations $\frac{m}{n}$ to $\zeta(5)$ satisfying
$$0<\left|\zeta(5)-\frac{m}{n}\right|<\frac{1}{n^{0.86}}.$$

Let $$A^{\prime}=\left\{\frac{\zeta(3)}{\pi^3}, \frac{\zeta(5)}{\pi^5}, \ldots, \frac{\zeta(2k+1)}{\pi^{2k+1}},\ldots\right\}.$$
Recently, applying the following trigonometric integral by Orr (\cite{Orr}, also see \cite[Lemma 2.1]{LW}):
\begin{equation}\label{(2)}
\begin{aligned}
\int_{0}^{2\pi}t^{r}\cot(t)dt&=\left(\frac{\pi}{2}\right)^{r}\left(\log 2+\sum_{k=1}^{\lfloor r/2\rfloor}\frac{r!(-1)^{k}(4^k-1)}{(r-2k)!(2\pi)^{2k}}\zeta(2k+1)\right)\\
&\quad+\delta_{\lfloor r/2\rfloor,r/2}\frac{r!(-1)^{r/2}\zeta(r+1)}{\pi^{r}}
\end{aligned}
\end{equation}
for  $r=1,2,3,\ldots$,
 Lupu and Wu \cite{LW} improved Alkan's result to obtain the following approximation, 
which shows that  the real numbers can be strongly approximated by  linear combinations of elements in $A^{\prime}$, where the coefficients are values of the derivatives of rational polynomials.

\begin{theorem}[{\cite[Theorem 1.1]{LW}}]\label{Theorem 4}
Let $k_{0}$ and $q$ be positive integers, let $\alpha$ be a real number. For any given integer $n\geq 3$, there exists an integer $r\leq n$ and a rational polynomial $P_{n}(t)=t^{2k_{0}}(1-t)^{2k_{0}}s_{r}(t)$
with degree $4k_{0}+r$ depending on $\alpha,n,q$ and $k_{0}$,
satisfying 
$$\left|\alpha-\sum_{k=k_{0}}^{2k_{0}+\lfloor \frac{r}{2}\rfloor}c_{k}\frac{\zeta(2k+1)}{\pi^{2k+1}}\right|\ll_{\alpha,q, k_{0}}\frac{1}{n^q}$$
when $r\geq 2$, where $$c_{k}=(-1)^{k}2\left[P_{n}^{(2k)}(1)\left(1-\frac{1}{4^{k}}\right)+P_{n}^{(2k)}(0)\right]$$
for all $k_{0}\leq k \leq 2k_{0}+\lfloor \frac{r}{2}\rfloor$.
\end{theorem}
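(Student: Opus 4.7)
The plan is to use the Orr identity (1.3) as the bridge between polynomial integrals and linear combinations of $\zeta(2k+1)/\pi^{2k+1}$, and then to mimic the density-type argument of Alkan \cite{Alkan} so as to produce a rational polynomial $s_r(t)$ realizing the rate $n^{-q}$. Concretely, I would first establish an integral identity of the form
$$\int_0^1 P_n(t)\cot(\pi t)\,dt \;=\; \sum_{k=k_0}^{2k_0+\lfloor r/2\rfloor} c_k\,\frac{\zeta(2k+1)}{\pi^{2k+1}}$$
for $P_n(t) = t^{2k_0}(1-t)^{2k_0}s_r(t)$ with the announced coefficients $c_k$, then bound the left-hand side uniformly in $n$, and finally choose $s_r$ in such a way that the integral lies within $n^{-q}$ of $\alpha$.

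The integral identity should follow by expanding $P_n$ monomial by monomial, applying (1.3) --- after the substitution $u = \pi t$ (reading the Orr formula over the natural interval on which $\cot$ is integrable) together with a symmetry argument $t \mapsto 1-t$ to combine the contributions at the two endpoints --- to each monomial, and then reassembling the resulting terms via iterated integration by parts so that polynomial coefficients become the boundary values $P_n^{(2k)}(0)$ and $P_n^{(2k)}(1)$. The prefactor $t^{2k_0}(1-t)^{2k_0}$ simultaneously tames the poles of $\cot(\pi t)$ at $t=0,1$ and forces $P_n^{(j)}(0) = P_n^{(j)}(1) = 0$ for all $j < 2k_0$, which pins the lower summation index at $k_0$; the total degree $\deg P_n = 4k_0 + r$ yields the upper index $2k_0+\lfloor r/2\rfloor$. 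The coefficient $(1 - 4^{-k})$ next to $P_n^{(2k)}(1)$ is the shadow of the factor $(4^k - 1)/(2\pi)^{2k}$ in (1.3), the overall factor $2$ comes from the $t \mapsto 1-t$ symmetrization, and the sign $(-1)^k$ reflects the alternating sign in the Orr formula.

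Because $t^{2k_0}(1-t)^{2k_0}\leq 4^{-2k_0}$ on $[0,1]$ and $|\cot(\pi t)|$ is integrable against $t(1-t)$, the left-hand integral is controlled by $C_{k_0}\,\|s_r\|_{\infty,[0,1]}$ uniformly in $r$ and $n$. The main obstacle, and the crux of the argument, is then to construct a rational polynomial $s_r$ of degree $r \leq n$ so that the integral actually approximates $\alpha$ to within $n^{-q}$. Writing $s_r(t) = \sum_{j=0}^r a_j t^j$ with $a_j \in \mathbb{Q}$, the integral becomes a $\mathbb{Q}$-linear combination of the fixed real numbers $I_j = \int_0^1 t^{2k_0+j}(1-t)^{2k_0}\cot(\pi t)\,dt$, so the approximation problem reduces to approximating $\alpha$ by a rational combination $\sum_j a_j I_j$ of controlled height; Dirichlet's simultaneous approximation theorem (or an equivalent pigeonhole) then produces suitable $a_j$ once $r$ is large enough relative to $q$ and $k_0$. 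The delicate point is the calibration of the trade-off among the degree bound $r \leq n$, the heights of the $a_j$, the target precision $n^{-q}$, and the growth of $\|s_r^{(2k)}\|$ that enters the implied constant $\ll_{\alpha,q,k_0}$; this balancing is handled following the template used by Alkan and by Lupu--Wu in their earlier constructions, and is the part requiring the most care.
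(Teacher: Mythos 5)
There is a genuine gap, and it sits in the very first step: your bridging identity uses the wrong kernel. The correct kernel on $[0,1]$ is $\cot\bigl(\tfrac{\pi t}{2}\bigr)$, not $\cot(\pi t)$, and this is not cosmetic. Writing $\pi\cot(\pi t)=\tfrac{d}{dt}\log(2\sin\pi t)$ with $\log(2\sin\pi t)=-\sum_{m\ge1}\cos(2\pi m t)/m$ and integrating by parts, the harmonics $\cos(2\pi m t)$ take the value $1$ at \emph{both} endpoints, so the resulting coefficients are proportional to $P^{(2k)}(0)-P^{(2k)}(1)$ and only the full series $\zeta(2k+1)$ ever appears --- no factor $\bigl(1-\tfrac{1}{4^{k}}\bigr)$. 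That factor, attached to $P^{(2k)}(1)$ but \emph{not} to $P^{(2k)}(0)$, is the fingerprint of the half-period kernel: for $\cot\bigl(\tfrac{\pi t}{2}\bigr)$ one has $\log\bigl(2\sin\tfrac{\pi t}{2}\bigr)=-\sum_m\cos(\pi m t)/m$, whose harmonics equal $(-1)^m$ at $t=1$, producing the alternating series $(1-2^{-2k})\zeta(2k+1)$ at that endpoint only. Your proposed mechanism --- an ``overall factor $2$ from the $t\mapsto 1-t$ symmetrization'' --- cannot produce this: symmetrizing forces the two endpoints to enter with the same coefficient (indeed $\cot(\pi(1-t))=-\cot(\pi t)$, so the symmetrization is an antisymmetrization), which contradicts the asymmetric form of $c_k$. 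The identity you need is exactly the paper's Lemma \ref{4.2} (Lupu--Wu's Lemma 2.3), in which the extra term $\tfrac{2P(1)}{\pi}\log 2$ is killed because $P_n(1)=0$; you should also note that it is the vanishing of $P_n^{(j)}(0),P_n^{(j)}(1)$ for $j<2k_0$ that raises the lower summation index to $k_0$, which you do observe correctly.

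Your second step also departs from the paper and is left unexecuted. The paper does not choose $s_r$ by Dirichlet/pigeonhole approximation of $\alpha$ by $\sum_j a_j I_j$; instead it first realizes $\alpha$ \emph{exactly} as an integral, $\alpha=\int_0^1 f_\alpha(t)\,t^{2k_0}(1-t)^{2k_0}\cot\bigl(\tfrac{\pi t}{2}\bigr)\,dt$ for some $f_\alpha\in C^\infty[0,1]$, via the intermediate-value construction of Lemma \ref{2.2} (applied to the functional $S_{k_0}$ of Section 4), and then invokes Jackson's theorem to produce a rational polynomial $s_r$ of degree $r\le n$ with $\|f_\alpha-s_r\|_\infty\ll_{f_\alpha,q}n^{-q}$; the error is then bounded by $\|f_\alpha-s_r\|_\infty\int_0^1 t^{2k_0}(1-t)^{2k_0}\cot\bigl(\tfrac{\pi t}{2}\bigr)\,dt$, with no calibration of coefficient heights needed. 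The ``delicate balancing'' you defer to the end is precisely what this two-step argument (exact integral representation, then uniform polynomial approximation of the density) is designed to avoid; as written, your proposal leaves the crux unproved and rests on an integral identity that is false for the kernel you chose.
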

 
In fact, there is a long history for investigating the connections between zeta values and the trigonometric integrals such as (\ref{(1)}) and (\ref{(2)}).
Let 
\begin{equation}\label{lam}
\begin{aligned}
\lambda(s)&=\sum_{n=0}^\infty\frac1{(2n+1)^s}, ~~~~\rm{Re}(s)>1
\end{aligned}
\end{equation}
be Dirichlet's lambda function (see \cite[p. 954, (1.9)]{HK2019}), which was studied by Euler under the notation $N(s)$ 
(see \cite[p. 70]{Var}).
Long time ago, Euler has obtained a formula of $\lambda(3)$:
\begin{equation}\label{(3)}
\lambda(3)=1+\frac{1}{3^{3}}+\frac{1}{5^3}+\cdots=\frac{\pi^2}{\log 2}+2\int_0^{\frac{\pi}{2}}t\log(\sin t) dt
\end{equation}
(see \cite[p. 63]{Var}),
which connects to the problem of finding an explicit formula for $\zeta(3)$ (see \cite[section 1.2]{HK2025} for a historical overview).
Generalizing (\ref{(3)}), for any $0\leq x<\pi$ and  $r=2,3,4,\ldots$, Koyama and Kurokawa \cite[Theorem 1]{KK05} showed that
\begin{equation}\label{(4)}
\int_0^x t^{r-2}\log(\sin t)dt=\frac{x^{r-1}}{r-1}\log\left(\sin x\right)
-\frac{\pi^{r-1}}{r-1}\log \mathcal S_r\left(\frac x \pi\right).
\end{equation}
Here $\mathcal S_r(x)$ is Kurokawa's multiple sine function of order $r=2,3,4,\ldots$ (see \cite{Ku1, Ku2, Ku3}):
\begin{equation}
\mathcal S_r(x)=\exp\left(\frac{x^{r-1}}{r-1}\right)\prod_{n=1}^\infty\left\{P_r\left(\frac xn\right)P_r\left(-\frac xn\right)^{(-1)^{r-1}}\right\}^{n^{r-1}},
\end{equation}
where 
\begin{equation}\label{Pr}
P_r(x)=(1-x)\exp\left(x+\frac{x^2}{2}+\cdots+\frac{x^r}{r}\right).
\end{equation}
By letting $r=1$, we recover the infinite product representation of the classical sine function:
$$\mathcal S_1(x)=2\sin(\pi x)=2\pi x \prod_{n=1}^\infty\left(1-\frac{x^2}{n^2}\right)$$
and by letting $r=2$, we get H\"older's double sine function $\mathcal S_2(x)$ (see \cite{Holder}):
\begin{equation}
\mathcal S_2(x)=e^x\prod_{n=1}^\infty\left\{\left(\frac{1-\frac xn}{1+\frac xn}\right)^ne^{2x}\right\}.
\end{equation}

Recently, we have proved a cosine counterpart integral of (\ref{(4)}):
\begin{equation}
\int_0^x t^{r-2}\log\left(\cos\frac{t}{2}\right)dt=\frac{x^{r-1}}{r-1}\log\left(\cos\frac x2\right)
-\frac{(2\pi)^{r-1}}{r-1}\log\C_r\left(\frac x{2\pi}\right)
\end{equation}
(see \cite[Theorem 2.1]{HK2025}).
Here $\C_r\left(x\right)$ is Kurokawa and Koyama's multiple cosine function or order $r=2,3,4,\ldots$:
\begin{equation}\label{mcos-def}
\begin{aligned}
\C_r(x)&=\prod_{n=-\infty, n\text{:odd}}^\infty P_r\left(\frac{x}{\frac n2}\right)^{(\frac n2)^{r-1}} \\
&=\prod_{n=1, n\text{:odd}}^\infty \left\{P_r\left(\frac{x}{\frac n2}\right)P_r\left(-\frac{x}{\frac n2}\right)^{(-1)^{r-1}}\right\}^{(\frac n2)^{r-1}}.
\end{aligned}
\end{equation}
 (see  \cite{KW03}, \cite{KW} and \cite{KW04}).
By letting $r=1$, we recover the infinite product representation of the classical cosine function:
\begin{equation}\label{cos1-ex}
\C_1(x)=2\cos(\pi x) \\
=2\prod_{n=1, n\text{:odd}}^\infty \left(1-\frac{x^2}{(\frac n2)^2} \right).
\end{equation}
And by letting $r=2,3,4$, we get the following examples, respectively:
\begin{equation}\label{mcos-ex}
\begin{aligned}
\C_2(x)&=\prod_{n=1, n\text{:odd}}^\infty \left\{\left(\frac{1-\frac x{(\frac n2)}}{1+\frac x{(\frac n2)}}\right)^{\frac n2}e^{2x}\right\}, 
\\
\C_3(x)&=\prod_{n=1, n\text{:odd}}^\infty \left\{\left(1-\frac{x^2}{(\frac n2)^2}\right)^{(\frac n2)^2}e^{x^2}\right\},
\\
\C_4(x)&=\prod_{n=1, n\text{:odd}}^\infty \left\{\left(\frac{1-\frac x{(\frac n2)}}{1+\frac x{(\frac n2)}}\right)^{(\frac n2)^3}e^{\frac{n^2}2 x+\frac23 x^3}\right\}
\end{aligned}
\end{equation}
(see \cite{KK,KW03,KW04}).

Furthermore, by applying the integrals 
$$\int_0^x t^{r-2}\log\left(\sin t\right)dt \quad\textrm{and}\quad\int_0^x t^{r-2}\log\left(\cos\frac{t}{2}\right)dt,$$
we can show that the special values of multiple sine and cosine functions will be expressed by the zeta values.
For example, \cite[Theorem 2.4]{HK2025} gives 
\begin{equation}\label{(1.14)}
\begin{aligned}
\log\C_r\left(\frac14\right)
&=\frac{\log2}{2^{2r-1}}-\frac{(r-1)!}{(2\pi)^{r-1}}\sin\left(\frac{r\pi}{2}\right)\zeta_E(r) \\
&\quad-\frac{r-1}{2^{2(r-1)}}\sum_{k=0}^{\left\lfloor \frac{r-2}{2}\right\rfloor }(-1)^k(2k)!\binom{r-2}{2k}\left(\frac2\pi\right)^{2k+1}
 \\
&\quad\times\beta(2k+2) \\
&\quad-\frac{r-1}{2^{2r-1}}\sum_{k=1}^{\left\lceil \frac{r-2}2\right\rceil}\frac{(-1)^{k-1}(2k-1)!}{\pi^{2k}}\binom{r-2}{2k-1}
\\
&\quad\times\zeta_E(2k+1)
\end{aligned}
\end{equation}
for $r=2,3,4,\ldots$.
Here  
\begin{equation}\label{A-zeta-1}
\zeta_E(s)=\sum_{n=1}^\infty\frac{(-1)^{n+1}}{n^{s}}, \quad\rm{Re}(s)>0
\end{equation}
 is the alternating zeta function
(also known as Dirichlet's eta or Euler’s eta function) and 
\begin{equation}\label{beta-def}
\beta(s)=\sum_{n=0}^\infty\frac{(-1)^{n}}{(2n+1)^s}, \quad\rm{Re}(s)>0
\end{equation} 
 is the alternating form of Dirichlet's lambda function (\ref{lam}) (also known as Dirichlet's beta function), which was studied by Euler under the notation  $L(s)$ (see \cite[p.~70]{Var}). 
 Let 
 $$
G=\sum_{n=0}^\infty\frac{(-1)^n}{(2n+1)^2}=0.915965594177219015\cdots
$$
be the Catalan constant.
As an example, if setting $r=3$ in (\ref{(1.14)}), then we get
 $$\log\C_3\left(\frac14\right)=\frac{\log2}{32}-\frac{G}{4\pi}+\frac{7\zeta_E(3)}{16\pi^2},$$
 which is equivalent to a formula of $\zeta(3)$:
 $$
\zeta(3)=\frac{4\pi^2}{21}\log\left(\frac{e^{\frac{4G}{\pi}}\C_3\left(\frac14\right)^{16}}{\sqrt2}\right)
$$ 
(see \cite[Corollary 2.7]{HK2025}).
 
Now for any $0\leq x<\frac{1}{2}$, let  
 $$B=\left\{\frac{\log\C_r(x)}{\pi}~~\bigg|~~r=1,2,3,\ldots\right\}$$
 and $$C=\left\{\frac{\log S_r(x)}{\pi}~~\bigg|~~r=1,2,3,\ldots\right\}$$
 be the sets of special values of multiple cosine and sine functions at $x$, respectively.
In this paper, by applying the  trigonometric integrals (see Lemmas \ref{2.1} and \ref{4.1})
$$\int_0^x t^{r}\tan(\pi t)dt\quad\textrm{and}\quad\int_0^x t^{r}\cot\left(\frac{\pi t}{2}\right)dt,$$
we show that for any fixed $x\in[0,\frac{1}{2})$, the real numbers can be strongly approximated by
linear combinations of elements in $B$ and $C$ respectively,  with rational coefficients 
(see Sections 2 and 4).
Furthermore, by considering the integral  $$\int_0^1 t^{r-2}\log\left(\cos\frac{\pi t}{4}\right)dt$$
(see Lemma \ref{3.1}), we prove that the set of zeta and beta values
$$D=\left\{\frac{\zeta_{E}(3)}{\pi^2},\frac{\zeta_{E}(5)}{\pi^4}, \ldots, \frac{\zeta_{E}(2k+1)}{\pi^{2k}},\ldots; \frac{\beta(4)}{\pi^3},\frac{\beta(6)}{\pi^5}, \ldots, \frac{\beta(2k+2)}{\pi^{2k+1}},\ldots\right\}$$
has a similar approximation property, where the coefficients are values of the derivatives of rational polynomials (see Section 3).

\section{The integral $\int_0^x t^{r}\tan(\pi t)dt$} 
For any fixed $x\in[0,\frac{1}{2})$, let $$B=\left\{\frac{\log\C_r(x)}{\pi}~~\bigg|~~r=1,2,3,\ldots\right\}.$$
In this section, we will show that  the real numbers can be approximated by
linear combinations of elements in $B$  with rational coefficients.
First, we need the following lemmas.
\begin{lemma}\label{2.1}
For $0\leq x<\frac{1}{2}$ and $r=1,2,3,\ldots$, we have
\begin{equation}
\int_{0}^{x}t^{r}\tan(\pi t)dt=-\frac{\log\C_{r+1}(x)}{\pi}.
\end{equation}
\end{lemma}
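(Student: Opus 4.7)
The plan is to reduce the tangent integral to an integral of $\log(\cos(\pi t))$ by integration by parts, and then to invoke the cosine-integral identity from \cite[Theorem 2.1]{HK2025} recalled earlier in the introduction. Since $\cos(\pi t)>0$ on $[0,\frac12)$, one has $\tan(\pi t)=-\frac{1}{\pi}\frac{d}{dt}\log(\cos(\pi t))$. Integrating $\int_0^x t^r\tan(\pi t)\,dt$ by parts with $u=t^r$ and $dv=\tan(\pi t)\,dt$ would therefore produce
$$\int_0^x t^r\tan(\pi t)\,dt=-\frac{x^r}{\pi}\log(\cos(\pi x))+\frac{r}{\pi}\int_0^x t^{r-1}\log(\cos(\pi t))\,dt.$$

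The second step is to rescale the identity
$$\int_0^x t^{r-2}\log\!\left(\cos\frac{t}{2}\right)dt=\frac{x^{r-1}}{r-1}\log\!\left(\cos\frac{x}{2}\right)-\frac{(2\pi)^{r-1}}{r-1}\log\C_r\!\left(\frac{x}{2\pi}\right)$$
so that it matches the remaining integral. Substituting $t=2\pi u$ and writing $y=x/(2\pi)$, this identity becomes
$$\int_0^y u^{s-2}\log(\cos(\pi u))\,du=\frac{y^{s-1}}{s-1}\log(\cos(\pi y))-\frac{1}{s-1}\log\C_s(y),$$
valid for every integer $s\geq 2$. Applying the rescaled formula with $s=r+1$ (which requires $r\geq 1$, i.e.\ exactly the range of the lemma) yields
$$\int_0^x t^{r-1}\log(\cos(\pi t))\,dt=\frac{x^r}{r}\log(\cos(\pi x))-\frac{1}{r}\log\C_{r+1}(x).$$

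Substituting this expression back into the integration-by-parts identity from the first step, the two $\log(\cos(\pi x))$ contributions cancel exactly, leaving
$$\int_0^x t^r\tan(\pi t)\,dt=-\frac{1}{\pi}\log\C_{r+1}(x),$$
which is the claim. There is no real obstacle: the proof is essentially one integration by parts followed by a change of variables, and the only point to check is the index bookkeeping so that the quoted formula, stated for $r\geq 2$, applies to the rescaled exponent $s=r+1\geq 2$ and hence covers even the boundary case $r=1$ (corresponding to $\C_2$).
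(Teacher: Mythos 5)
Your proof is correct, but it takes a genuinely different route from the paper's. The paper simply quotes \cite[Proposition 3.3]{HK2025}, which already expresses $\log\C_r\left(\frac{x}{2}\right)$ as $-\frac{\pi}{2^r}\int_0^x t^{r-1}\tan\left(\frac{\pi t}{2}\right)dt$, and the entire proof is then a one-line change of variables $t\mapsto t/2$ plus reindexing $r\mapsto r+1$. You instead start from \cite[Theorem 2.1]{HK2025} (the $\int_0^x t^{r-2}\log\left(\cos\frac{t}{2}\right)dt$ identity, which is the version actually displayed in the introduction of this paper), rescale it to $\int_0^y u^{s-2}\log(\cos(\pi u))\,du$, and recover the tangent integral by an integration by parts in which the boundary terms $\pm\frac{x^r}{\pi}\log(\cos(\pi x))$ cancel. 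All of your steps check out: the antiderivative $-\frac{1}{\pi}\log(\cos(\pi t))$ is legitimate on $[0,\frac12)$, the boundary term at $t=0$ vanishes for $r\ge 1$, the rescaling of the quoted identity is algebraically correct, and the index shift $s=r+1\ge 2$ exactly covers $r\ge 1$. In effect you have re-derived \cite[Proposition 3.3]{HK2025} from \cite[Theorem 2.1]{HK2025} on the way; this makes your argument longer but more self-contained with respect to the formulas actually reproduced in this paper, whereas the paper's proof is shorter at the price of leaning on an external proposition it does not restate.
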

\begin{proof}
By \cite[Proposition 3.3]{HK2025}, for $0\leq x<1$ and $r=2,3,4,\ldots$, we have
\begin{equation}
\begin{aligned}
\log\C_r\left(\frac x2\right)&=-\frac{1}{2^r}\int_0^x \pi t^{r-1}\tan\left(\frac{\pi t}{2}\right)dt\\
&=-\pi \int_0^x  \left(\frac{t}{2}\right)^{r-1}\tan\left(\frac{\pi t}{2}\right)d\left(\frac{t}{2}\right)\\
&=-\pi\int_{0}^{\frac{x}{2}}{t}^{r-1}\tan(\pi t)dt,
\end{aligned}
\end{equation}
which is equivalent to
$$\int_{0}^{x}t^{r}\tan(\pi t)dt=-\frac{\log\C_{r+1}(x)}{\pi}$$
for $0\leq x<\frac{1}{2}$ and $r=1,2,3,\ldots$.
\end{proof}

\begin{lemma}\label{2.2}
For any $x\in[0,\frac{1}{2})$ and a positive integer $k_{0}$. Let $\alpha$ be a real number. 
Then there exists a function $f_{\alpha}\in C^{\infty}[0,x]$ such that
$$\alpha=\int_{0}^{x}f_{\alpha}(t)t^{2k_{0}}(1-t)^{2k_{0}}\tan(\pi t)dt.$$
\end{lemma}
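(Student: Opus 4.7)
The statement is essentially a normalization remark. On the interval $[0,x]$ with $x\in(0,\tfrac12)$, the weight
$$w(t):=t^{2k_{0}}(1-t)^{2k_{0}}\tan(\pi t)$$
is continuous and strictly positive on $(0,x]$, so the integral
$$I:=\int_{0}^{x} w(t)\,dt$$
is a well-defined positive real number. I would therefore simply take $f_{\alpha}$ to be the constant function $f_{\alpha}(t)\equiv \alpha/I$, which trivially lies in $C^{\infty}[0,x]$ and satisfies
$$\int_{0}^{x} f_{\alpha}(t)\, t^{2k_{0}}(1-t)^{2k_{0}}\tan(\pi t)\,dt \;=\; \frac{\alpha}{I}\cdot I \;=\; \alpha$$
by construction.

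The only point requiring verification is $I>0$. This is immediate: on $(0,x]$ we have $t^{2k_{0}}>0$ and $(1-t)^{2k_{0}}>0$; and since $0\le t\le x<\tfrac12$, the factor $\cos(\pi t)$ is bounded away from $0$, while $\sin(\pi t)\ge 0$ with equality only at $t=0$. Hence $w$ is continuous on $[0,x]$ and strictly positive on $(0,x]$, which forces $I>0$. (The degenerate endpoint $x=0$ is vacuous: the integral vanishes identically and the claim reduces to $\alpha=0$; in all subsequent applications one uses $x>0$.)

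There is no genuine obstacle in proving Lemma~\ref{2.2}; its content lies entirely in its intended use rather than in its proof. Combined with Lemma~\ref{2.1}, a Weierstrass-type polynomial approximation $s_{r}(t)$ of $f_{\alpha}$ on $[0,x]$ will convert the integral identity above into a linear combination of the values $\log\C_{r+1+j}(x)/\pi$ with rational coefficients that strongly approximates $\alpha$, paralleling the strategy of Alkan~\cite{Alkan} and Lupu--Wu~\cite{LW} for odd zeta values.
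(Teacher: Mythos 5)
Your proof is correct, and it is genuinely simpler than the one in the paper. The paper does not take the constant function: it sets up $T_{x,k_{0}}$ as a bounded linear functional on $C^{\infty}[0,x]$, builds two smooth bump functions $f_{1},f_{2}$ supported near $t=x/2$ with heights $M$ (large) and $N$ (small) so that $T_{x,k_{0}}(f_{2})<\alpha<T_{x,k_{0}}(f_{1})$, and then applies the intermediate value theorem to $\lambda\mapsto T_{x,k_{0}}(\lambda f_{1}+(1-\lambda)f_{2})$, treating the cases $\alpha>0$, $\alpha<0$, $\alpha=0$ separately. Both arguments rest on the same single fact — that $\int_{0}^{x}t^{2k_{0}}(1-t)^{2k_{0}}\tan(\pi t)\,dt>0$ for $0<x<\tfrac12$ — and your observation that one can then just take $f_{\alpha}\equiv\alpha/I$ collapses the paper's IVT machinery (which, being applied to a \emph{linear} functional, is overkill anyway: the interpolation in $\lambda$ is affine and one could solve for $\lambda$ explicitly). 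Nothing in the subsequent use of the lemma (the Jackson approximation step in Theorem \ref{Theorem 1}) requires anything of $f_{\alpha}$ beyond membership in $C^{\infty}[0,x]$, so your constant choice loses nothing. Your parenthetical about the degenerate endpoint $x=0$ is also fair: the lemma as stated fails there for $\alpha\neq 0$, and the paper's own proof silently assumes $x>0$ as well (its constants $K_{x,k_{0}}$ and $L_{x,k_{0}}$ are claimed positive), so this is a defect of the statement rather than of either proof.
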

\begin{proof}
Fixed an $x\in[0,\frac{1}{2})$, define a functional on $C^{\infty}[0,x]$ by
\begin{equation}
T_{x,k_{0}}(f(t))=\int_{0}^{x}f(t)t^{2k_{0}}(1-t)^{2k_{0}}\tan(\pi t)dt,
\end{equation}
for $f(t)\in C^{\infty}[0,x]$.
Since $$\int_{0}^{x}t^{2k_{0}}(1-t)^{2k_{0}}\tan(\pi t)dt=K_{x,k_{0}}>0,$$
we have 
\begin{equation}
\begin{aligned}
|T_{x,k_{0}}(f(t))|&=\left|\int_{0}^{x}f(t)t^{2k_{0}}(1-t)^{2k_{0}}\tan(\pi t)dt\right|\\
&\leq K_{x,k_{0}}\sup_{t\in [0,x]} |f(t)|
\end{aligned}
\end{equation}
for $f(t)\in C^{\infty}[0,x]$, 
thus $T_{x,k_{0}}(f(t))$ is a bounded linear functional under the supremum metric.
Then fixed a small $\delta>0$, define a smooth function
\begin{equation}
f_{1}(t)=
\begin{cases}
M,&t\in\left[\frac{x}{2}-\delta,\frac{x}{2}+\delta\right]\\
0,&t\in\left[0, \frac{x}{2}-2\delta\right]\cup\left[\frac{x}{2}+2\delta, x\right].
\end{cases}
\end{equation}
Since
$$\inf_{t\in\left[\frac{x}{2}-\delta,\frac{x}{2}+\delta\right]}\left\{t^{2k_{0}}(1-t)^{2k_{0}}\tan(\pi t)\right\}=L_{x, k_{0}}>0,$$
we have 
\begin{equation} 
\begin{aligned}
T_{x,k_{0}}(f_{1}(t))&=\int_{0}^{x}f_{1}(t)t^{2k_{0}}(1-t)^{2k_{0}}\tan(\pi t)dt\\
&\geq 2M\delta L_{x,k_{0}}.
\end{aligned}
\end{equation}
Similarly, define a smooth function
\begin{equation} 
f_{2}(t)=
\begin{cases}
N,&t\in\left[\frac{x}{2}-\delta,\frac{x}{2}+\delta\right] \\
0,&t\in\left[0, \frac{x}{2}-2\delta\right]\cup\left[\frac{x}{2}+2\delta, x\right],
\end{cases}
\end{equation}
we have
\begin{equation}
\begin{aligned}
T_{x,k_{0}}(f_{2}(t))&=\int_{0}^{x}f_{2}(t)t^{2k_{0}}(1-t)^{2k_{0}}\tan(\pi t)dt\\
&\leq NK_{x,k_{0}}.
\end{aligned}
\end{equation}
For any $\alpha>0$, taking $N>0$ small enough such that 
\begin{equation}\label{T1}
NK_{x,k_{0}}<\alpha
\end{equation}
and taking $M>0$ big enough such that 
\begin{equation}\label{T2}
2M\delta L_{x,k_{0}}>\alpha .
\end{equation}
For any $0\leq\lambda\leq 1$, define
$$f^{(\lambda)}(t)=\lambda f_{1}(t)+(1-\lambda)f_{2}(t).$$
Obviously, it is in $C^{\infty}[0,x]$.
Thus for $\lambda\in [0,1]$,
$$g(\lambda)=T_{x,k_{0}}(f^{(\lambda)}(t))$$ becomes a smooth function from $[0,1]$ to $\mathbb{R}$.
Since by (\ref{T1}) and (\ref{T2}) we have
$$T_{x,k_{0}}(f_{2}(t)) < \alpha < T_{x,k_{0}}(f_{1}(t)),$$
from the intermediate value theorem, there exists an $\lambda\in(0,1)$ such that $f^{(\lambda)}(t)$ satisfying
$$\alpha=T_{x,k_{0}}(f^{(\lambda)}(t))=\int_{0}^{x}f^{(\lambda)}(t)t^{2k_{0}}(1-t)^{2k_{0}}\tan(\pi t)dt.$$
Denote $f_{\alpha}(t)=f^{(\lambda)}(t)$.
If $\alpha<0$, then $-\alpha>0$ and from the above result there exists a $f_{-\alpha}(t)\in C^{\infty}[0,x]$ such that
$$-\alpha=T_{x,k_{0}}(f_{-\alpha}(t)).$$
Let $f_{\alpha}=-f_{-\alpha}$, then 
$$T_{x,k_{0}}(f_{\alpha}(t))=-T_{x,k_{0}}(f_{-\alpha}(t))=\alpha$$
satisfying our requirement.
If $\alpha=0$, taking $f_{\alpha}(t)\equiv 0$ for $t\in[0,x]$, then we have
$$T_{x,k_{0}}(f_{\alpha}(t))=0=\alpha.$$
We thus obtain Lemma \ref{2.2}.
\end{proof}

Now we are at the position to prove the approximation property of the set $B$.
\begin{theorem}\label{Theorem 1}
Fixed an $x\in[0,\frac{1}{2})$. Let $k_{0}$ and $q$ be positive integers, let $\alpha$ be a real number. For any given integer $n\geq 3$, there exists an integer $r\leq n$ and a rational polynomial $P_{n}(t)$
with degree $4k_{0}+r$ depending on $\alpha,n,q,k_{0}$ and $x$:
$$P_{n}(t)=a_{2k_{0}}t^{2k_{0}}+\cdots+a_{4k_{0}+r}t^{4k_{0}+r},$$
satisfying 
$$\left|\alpha-\sum_{k=2k_{0}}^{4k_{0}+r}c_{k}\frac{\log\C_{k+1}(x)}{\pi}\right|\ll_{\alpha,q, k_{0}, x}\frac{1}{n^q},$$
where $$c_{k}=-a_{k}.$$
\end{theorem}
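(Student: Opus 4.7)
The strategy is to parallel the arguments of \cite{Alkan} and \cite{LW}: convert a linear combination of the target quantities into a single trigonometric integral, then realize any prescribed real number by such an integral up to a negligible remainder. By Lemma \ref{2.1}, for each integer $k\geq 0$,
\begin{equation*}
\frac{\log \C_{k+1}(x)}{\pi} \;=\; -\int_0^x t^k \tan(\pi t)\, dt,
\end{equation*}
so if $P_n(t)=\sum_{k=2k_0}^{4k_0+r} a_k t^k$ is any rational polynomial of the shape prescribed in the statement, then summing the above identity against the $a_k$ gives
\begin{equation*}
\sum_{k=2k_0}^{4k_0+r} c_k \frac{\log \C_{k+1}(x)}{\pi} \;=\; \int_0^x P_n(t)\tan(\pi t)\, dt, \qquad c_k:=-a_k.
\end{equation*}
Hence it suffices to exhibit such a $P_n$ for which the integral above is within $O(n^{-q})$ of $\alpha$.

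To produce $P_n$, first invoke Lemma \ref{2.2} to select a function $f_\alpha\in C^\infty[0,x]$ satisfying
\begin{equation*}
\alpha \;=\; \int_0^x f_\alpha(t)\, t^{2k_0}(1-t)^{2k_0}\tan(\pi t)\, dt.
\end{equation*}
Next, approximate $f_\alpha$ uniformly on $[0,x]$ by a polynomial with rational coefficients: since $f_\alpha$ is smooth, Jackson's theorem supplies real polynomials of degree $\leq n$ approximating $f_\alpha$ to order $O(n^{-q-1})$, and rounding their coefficients on a sufficiently fine rational grid yields a polynomial $s_r(t)\in\mathbb{Q}[t]$ of some degree $r\leq n$ with $\sup_{t\in[0,x]}|f_\alpha(t)-s_r(t)|\ll_{\alpha,q,x} n^{-q}$. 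Set $P_n(t):=t^{2k_0}(1-t)^{2k_0}s_r(t)$, a rational polynomial of degree $4k_0+r$ vanishing to order $2k_0$ at $0$, which therefore has the required expansion $P_n(t)=\sum_{k=2k_0}^{4k_0+r} a_k t^k$.

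Combining the two identities above gives
\begin{equation*}
\alpha - \sum_{k=2k_0}^{4k_0+r} c_k \frac{\log \C_{k+1}(x)}{\pi} \;=\; \int_0^x \bigl(f_\alpha(t)-s_r(t)\bigr) t^{2k_0}(1-t)^{2k_0}\tan(\pi t)\, dt,
\end{equation*}
whose absolute value is bounded by $K_{x,k_0}\,\|f_\alpha-s_r\|_{\infty,[0,x]} \ll_{\alpha,q,k_0,x} n^{-q}$, where $K_{x,k_0}=\int_0^x t^{2k_0}(1-t)^{2k_0}\tan(\pi t)\,dt$ is the (finite) constant from the proof of Lemma \ref{2.2}; finiteness uses $x<\tfrac12$, so $\tan(\pi t)$ stays bounded on $[0,x]$.

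The main obstacle is the polynomial approximation step with \emph{rational} coefficients at a guaranteed polynomial rate. Qualitative density is automatic from Weierstrass, but the quantitative bound requires using the smoothness of $f_\alpha$; Jackson's theorem then gives super-polynomial approximation by real-coefficient polynomials of degree $\leq n$, and since for each fixed $n$ the evaluation map on $[0,x]$ is Lipschitz in the coefficients, a subsequent rationalization of those coefficients introduces an error that can be absorbed into the Jackson bound. This rationalization device is standard and is already implicit in \cite{Alkan,LW}, which is why one may take $s_r\in\mathbb{Q}[t]$ and hence $P_n\in\mathbb{Q}[t]$ without loss in the rate.
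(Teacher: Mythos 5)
Your proposal is correct and follows essentially the same route as the paper's own proof: Lemma \ref{2.2} to realize $\alpha$ as the weighted tangent integral, Jackson's theorem to get a rational $s_r$ with $\|f_\alpha-s_r\|\ll n^{-q}$, the substitution $P_n(t)=t^{2k_0}(1-t)^{2k_0}s_r(t)$, and Lemma \ref{2.1} to convert $\int_0^x P_n(t)\tan(\pi t)\,dt$ into the linear combination of the $\log\C_{k+1}(x)/\pi$. Your extra remark on rationalizing the Jackson approximants is a harmless elaboration of the version of Jackson's theorem the paper cites from Alkan, which already yields rational coefficients.
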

\begin{proof}
Let $n\geq 3$ be a given integer. For $f(t)\in C^{\infty}[0,x]$, by a classical theorem of Jackson (see \cite[p. 3749, (2.16)]{Alkan}), 
there exists a rational polynomial of degree $r\leq n$, whose coefficients depend only on $f,n,q$ and $x$:
$$s_{r}(t)=a_{0}+a_{1}t+\cdots + a_{r}t^{r},$$
such that
\begin{equation}\label{T3}
\|f-s_{r}\|=\textrm{sup}_{t\in[0,x]}|f(t)-s_{r}(t)|\ll_{f,q,x}\frac{1}{n^{q}}.
\end{equation}
And by Lemma \ref{2.2}, there is a function $f_{\alpha}(t)\in C^{\infty}[0,x]$ such that
$$\alpha=\int_{0}^{x}f_{\alpha}(t)t^{2k_{0}}(1-t)^{2k_{0}}\tan(\pi t)dt.$$
Let
\begin{equation}\label{T4}
\begin{aligned}
P_{n}(t)&=t^{2k_{0}}(1-t)^{2k_{0}}s_{r}(t)\\
&=a_{2k_{0}}t^{2k_0}+\cdots+a_{4k_{0}+r}t^{4k_{0}+r}.
\end{aligned}
\end{equation}
Then applying (\ref{T3}) to $f_{\alpha}(t)$, we get
\begin{equation}
\begin{aligned}
&\left|\alpha -\int_{0}^{x}P_{n}(t)\tan(\pi t)dt\right|\\
&\qquad=\left|\int_{0}^{x}f_{\alpha}(t)t^{2k_{0}}(1-t)^{2k_{0}}\tan(\pi t)-\int_{0}^{x}P_{n}(t)\tan(\pi t)dt\right|\\
&\qquad\leq\int_{0}^{x}|f_{\alpha}(t)-s_{r}(t)|t^{2k_{0}}(1-t)^{2k_{0}}\tan(\pi t)dt\\
&\qquad\ll_{\alpha,q,k_{0},x}\frac{1}{n^{q}}.
\end{aligned}
\end{equation}
Now we calculate the integral
$\int_{0}^{x}P_{n}(t)\tan(\pi t)dt$.
By Lemma \ref{2.1}, we have
\begin{equation}
\begin{aligned}
\int_{0}^{x}P_{n}(t)\tan(\pi t)dt&=\sum_{k=2k_{0}}^{4k_{0}+r}a_{k}\int_{0}^{x}t^{k}\tan(\pi t)dt\\
&=\sum_{k=2k_{0}}^{4k_{0}+r}\left(-\frac{a_{k}}{\pi}\right)\log\C_{k+1}(x).
\end{aligned}
\end{equation}
Combing with (\ref{T4}) we get
$$\left|\alpha-\sum_{k=2k_{0}}^{4k_{0}+r}c_{k}\frac{\log\C_{k+1}(x)}{\pi}\right|\ll_{\alpha,q, k_{0}, x}\frac{1}{n^q},$$
where $$c_{k}=-a_{k}.$$
\end{proof}

\section{The integral $\int_0^1 t^{r}\log\left(\cos\frac{\pi t}{4}\right)dt$}
 In \cite[Theorem 2.3]{HK2025}, we have calculated the integral $\int_0^x t^{r-2}\log\left(\cos\frac{t}{2}\right)dt$
 for  $r=2,3,4,\ldots$ as follows
\begin{equation}\label{T5}
\begin{aligned}
\int_0^{\frac\pi2} t^{r-2}\log\left(\cos\frac{t}{2}\right)dt 
&=-\frac{\log2}{r-1}\left(\frac\pi2\right)^{r-1}+(r-2)!\sin\left(\frac{r\pi}{2}\right)\zeta_E(r) \\
&\quad+\sum_{k=0}^{\left\lfloor \frac{r-2}{2}\right\rfloor }(-1)^k(2k)!\binom{r-2}{2k}
\left(\frac\pi2\right)^{r-2k-2} \\
&\quad\times\beta(2k+2) \\
&\quad+\sum_{k=1}^{\left\lceil \frac{r-2}2\right\rceil}\frac{(-1)^{k-1}(2k-1)!}{2^{2k+1}}\binom{r-2}{2k-1}
\left(\frac\pi2\right)^{r-2k-1} \\
&\quad\times\zeta_E(2k+1).
\end{aligned}
\end{equation}
Recall that $\zeta_{E}(s)$ and $\beta(s)$ are Dirichlet's eta and beta functions, respectively (see (\ref{A-zeta-1}) and (\ref{beta-def})).

Let 
$$D=\left\{\frac{\zeta_{E}(3)}{\pi^2},\frac{\zeta_{E}(5)}{\pi^4}, \ldots, \frac{\zeta_{E}(2k+1)}{\pi^{2k}},\ldots; \frac{\beta(4)}{\pi^3},\frac{\beta(6)}{\pi^5}, \ldots, \frac{\beta(2k+2)}{\pi^{2k+1}},\ldots\right\}$$
be the set of zeta and beta values.
In this section, we will show that the real numbers can be strongly approximated by
linear combinations of elements in $D$, where the coefficients are values of the derivatives of rational polynomials.
First, we state the following integral representation which is the $\log\left(\cos\frac{\pi t}{4}\right)$-counterpart of Lemma \ref{2.1}.
 \begin{lemma}\label{3.1}
For $r=0,1,2,\ldots$, we have
\begin{equation}
\begin{aligned}
\int_{0}^{1}t^{r}\log\left(\cos\frac{\pi t}{4}\right)&=-\frac{\log 2}{r+1}-r!\sin\left(\frac{r\pi}{2}\right) 2^{r+1}\frac{\zeta_{E}(r+2)}{\pi^{r+1}}\\
&\quad+\sum_{k=0}^{\lfloor\frac{r}{2}\rfloor}(-1)^{k}(2k)!\binom{r}{2k}2^{2k+1}\frac{\beta(2k+2)}{\pi^{2k+1}}\\
&\quad+\frac{1}{2}\sum_{k=1}^{\left\lceil \frac{r}{2}\right\rceil}(-1)^{k}(2k-1)!\binom{r}{2k-1}\frac{\zeta_{E}(2k+1)}{\pi^{2k}}.
\end{aligned}
\end{equation}
\end{lemma}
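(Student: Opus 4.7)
The plan is to derive Lemma \ref{3.1} directly from the already-established identity (\ref{T5}), which evaluates $\int_0^{\pi/2} t^{r-2}\log(\cos(t/2))\,dt$ for $r \geq 2$. The key observation is that the two integrals differ only by a linear change of variables: substituting $t = \pi u/2$ in (\ref{T5}) gives $dt = (\pi/2)\,du$, $t^{r-2} = (\pi/2)^{r-2}u^{r-2}$, and $\cos(t/2) = \cos(\pi u/4)$, with endpoints $t=0,\pi/2$ mapping to $u = 0,1$. Hence
$$\int_0^1 u^{r-2}\log\left(\cos\frac{\pi u}{4}\right)du = \left(\frac{2}{\pi}\right)^{r-1}\int_0^{\pi/2} t^{r-2}\log\left(\cos\frac{t}{2}\right)dt.$$

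The next step is to set $s = r-2$, so that $s = 0,1,2,\ldots$ plays the role of the dummy index appearing in Lemma \ref{3.1}, and to substitute the closed form (\ref{T5}) into the right-hand side of the above identity, distributing the prefactor $(2/\pi)^{s+1}$ across each of the four summands. The constant term $-\tfrac{\log 2}{s+1}(\pi/2)^{s+1}$ collapses to $-\log 2/(s+1)$, matching the first term of the Lemma. For the $\zeta_E(s+2)$ term, the reduction $\sin((s+2)\pi/2) = -\sin(s\pi/2)$ produces the required $-s!\sin(s\pi/2)\, 2^{s+1}\zeta_E(s+2)/\pi^{s+1}$. In the $\beta$-sum, the combined power $(2/\pi)^{s+1}(\pi/2)^{s-2k}$ telescopes to $2^{2k+1}/\pi^{2k+1}$; in the $\zeta_E$-sum, $(2/\pi)^{s+1}(\pi/2)^{s-2k+1}$ telescopes to $2^{2k}/\pi^{2k}$, which together with the prefactor $1/2^{2k+1}$ leaves exactly $1/(2\pi^{2k})$.

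The main obstacle is merely careful bookkeeping of the powers of $\pi/2$ and of the signs $(-1)^k$ versus $(-1)^{k-1}$ that emerge from the identity $\sin((s+2)\pi/2) = -\sin(s\pi/2)$; there is no conceptual difficulty once (\ref{T5}) is accepted as an input. In particular, no further integration by parts or Fourier-series manipulation is required, since the hard analytic work (evaluating $\int t^{r-2}\log\cos(t/2)\,dt$ in terms of $\zeta_E$ and $\beta$) has already been completed in the derivation of (\ref{T5}) in \cite{HK2025}. This reduces the proof of Lemma \ref{3.1} to a short change-of-variable argument combined with algebraic simplification.
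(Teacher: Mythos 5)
Your proposal is exactly the paper's own proof: the linear substitution reducing $\int_0^1 t^{r-2}\log\left(\cos\frac{\pi t}{4}\right)dt$ to $\left(\frac{2}{\pi}\right)^{r-1}\int_0^{\pi/2}t^{r-2}\log\left(\cos\frac{t}{2}\right)dt$, followed by insertion of (\ref{T5}) and the reindexing $r\mapsto r+2$, with the powers of $\pi/2$ telescoping precisely as you describe. One caveat: carried out honestly, your computation (like the intermediate display in the paper's own proof) yields the factor $(-1)^{k-1}$ in the final $\zeta_E(2k+1)$-sum --- that sign is \emph{not} affected by the identity $\sin\left(\frac{(s+2)\pi}{2}\right)=-\sin\left(\frac{s\pi}{2}\right)$, which only enters the isolated $\zeta_E(r+2)$ term --- so the $(-1)^{k}$ printed in the lemma statement appears to be a sign typo; for instance, at $r=1$ the integral equals $-\frac{\log 2}{2}+\frac{2G}{\pi}-\frac{7\zeta_E(3)}{2\pi^{2}}$, which matches the $(-1)^{k-1}$ version (coefficient $-4+\frac12=-\frac72$) rather than the printed one ($-4-\frac12=-\frac92$).
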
 
\begin{proof}
By (\ref{T5}), for $r=2,3,4, \ldots$ we have
\begin{equation}
\begin{aligned}
\int_{0}^{1}t^{r-2}\log\left(\cos\frac{\pi t}{4}\right)dt&=\left(\frac{2}{\pi}\right)^{r-1}\int_{0}^{\frac{\pi}{2}}t^{r-2}\log\left(\cos\frac{t}{2}\right)dt\\
&=\left(\frac{2}{\pi}\right)^{r-1}\bigg(-\frac{\log 2}{r-1}\left(\frac{\pi}{2}\right)^{r-1}+(r-2)!\sin\left(\frac{r \pi}{2}\right)\zeta_{E}(r)\\
&\quad+\sum_{k=0}^{\lfloor\frac{r-2}{2}\rfloor}(-1)^{k}(2k)!\binom{r-2}{2k}\left(\frac{\pi}{2}\right)^{r-2k-2}\beta(2k+2)\\
&\quad+\sum_{k=1}^{\left\lceil \frac{r-2}2\right\rceil}\frac{(-1)^{k-1}(2k-1)!}{2^{2k+1}}\binom{r-2}{2k-1}\left(\frac{\pi}{2}\right)^{r-2k-1}\zeta_{E}(2k+1)\bigg)\\
&=-\frac{\log 2}{r-1}+(r-2)!\sin\left(\frac{r \pi}{2}\right)\left(\frac{2}{\pi}\right)^{r-1}\zeta_{E}(r)\\
&\quad+\sum_{k=0}^{\lfloor\frac{r-2}{2}\rfloor}(-1)^{k}(2k)!\binom{r-2}{2k}\left(\frac{2}{\pi}\right)^{2k+1}\beta(2k+2)\\
&\quad+\sum_{k=1}^{\left\lceil \frac{r-2}2\right\rceil}\frac{(-1)^{k-1}(2k-1)!}{2^{2k+1}}\binom{r-2}{2k-1}\left(\frac{2}{\pi}\right)^{2k}\zeta_{E}(2k+1),
\end{aligned}
\end{equation}
which is equivalent to
\begin{equation}
\begin{aligned}
\int_{0}^{1}t^{r}\log\left(\cos\frac{\pi t}{4}\right)&=-\frac{\log 2}{r+1}-r!\sin\left(\frac{r\pi}{2}\right) 2^{r+1}\frac{\zeta_{E}(r+2)}{\pi^{r+1}}\\
&\quad+\sum_{k=0}^{\lfloor\frac{r}{2}\rfloor}(-1)^{k}(2k)!\binom{r}{2k}2^{2k+1}\frac{\beta(2k+2)}{\pi^{2k+1}}\\
&\quad+\frac{1}{2}\sum_{k=1}^{\left\lceil \frac{r}{2}\right\rceil}(-1)^{k}(2k-1)!\binom{r}{2k-1}\frac{\zeta_{E}(2k+1)}{\pi^{2k}}
\end{aligned}
\end{equation}
for $r=0,1,2, \ldots$.
\end{proof}
The above lemma implies the following result.
\begin{lemma}\label{3.2}
For any polynomial $P(t)\in\mathbb{C}[t]$ with $P(0)=0$ and $\deg P=n$, we have
\begin{equation}
\begin{aligned}
\int_{0}^{1}P(t)\log \left(\cos \frac{\pi t}{4}\right)dt&=-\frac{P(1)}{r+1}\log 2
+\sum_{k=0}^{\lfloor\frac{n}{2}\rfloor}(-1)^{k}P^{(2k)}(1)2^{2k+1}\frac{\beta(2k+2)}{\pi^{2k+1}}\\
&\quad+\sum_{k=1}^{\left\lceil \frac{n}{2}\right\rceil}\left[\frac{1}{2}(-1)^{k}P^{(2k-1)}(1)+P^{(2k-1)}(0)2^{2k}\right]\\
&\quad\times\frac{\zeta_{E}(2k+1)}{\pi^{2k}}.
\end{aligned}
\end{equation}
\end{lemma}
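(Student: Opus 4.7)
The plan is to reduce Lemma \ref{3.2} to Lemma \ref{3.1} by linearity. Since $P(0)=0$, I would expand $P(t)=\sum_{j=1}^{n} a_j t^j$ and write
\begin{equation*}
\int_0^1 P(t)\log\left(\cos\frac{\pi t}{4}\right)dt=\sum_{j=1}^n a_j\int_0^1 t^j\log\left(\cos\frac{\pi t}{4}\right)dt,
\end{equation*}
then apply Lemma \ref{3.1} term by term. The task then reduces to recognizing the resulting sums in $a_j$ as values $P^{(k)}(0)=k!\,a_k$ and $P^{(k)}(1)=\sum_{j\ge k}\frac{j!}{(j-k)!}\,a_j$.

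First I would handle the $\log 2$ contribution, which collects as $-\log 2\sum_{j=1}^n a_j/(j+1)$. Next, for each fixed $k\ge 0$, the coefficient of $\beta(2k+2)/\pi^{2k+1}$ is
\begin{equation*}
(-1)^k 2^{2k+1}\sum_{j\ge 2k} a_j (2k)!\binom{j}{2k}=(-1)^k 2^{2k+1}\sum_{j\ge 2k}a_j\frac{j!}{(j-2k)!}=(-1)^k 2^{2k+1}P^{(2k)}(1),
\end{equation*}
which already matches the stated form. The genuine bookkeeping happens for the $\zeta_E$-coefficients, where two distinct terms in Lemma \ref{3.1} must be merged: the term $-j!\sin(j\pi/2)2^{j+1}\zeta_E(j+2)/\pi^{j+1}$ is supported on odd $j=2k-1$ only, contributing (via $\sin((2k-1)\pi/2)=(-1)^{k-1}$) the amount $(-1)^k(2k-1)!\,2^{2k}a_{2k-1}=(-1)^k 2^{2k}P^{(2k-1)}(0)$ to $\zeta_E(2k+1)/\pi^{2k}$, while the final sum in Lemma \ref{3.1} contributes $\frac{1}{2}(-1)^k P^{(2k-1)}(1)$ to the same coefficient. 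Adding these two yields the bracketed expression in the statement.

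The approach therefore amounts to linearity plus the elementary identification $(2k)!\binom{j}{2k}=j!/(j-2k)!$ combined with the derivative formulas for a polynomial at $0$ and $1$. The main (and only) obstacle is the careful re-indexing needed to merge the boundary contribution arising from the odd-$j$ case of $\zeta_E(j+2)$ into the family $\zeta_E(2k+1)$ indexed by $k$; once the index shift $j=2k-1$ is performed and $\sin((2k-1)\pi/2)=(-1)^{k-1}$ is used, the rest is a formal rearrangement. I would also briefly verify that the upper summation limit $\lceil n/2\rceil$ is correct by tracing it through both sources, since it must dominate the natural range $\lceil j/2\rceil$ for all $j\le n$.
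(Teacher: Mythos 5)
Your proposal is correct and is exactly the paper's (implicit) argument: the paper offers no written proof of Lemma \ref{3.2} beyond the remark that it follows from Lemma \ref{3.1}, and the intended route is precisely your linearity-plus-reindexing computation with $P^{(2k)}(1)=\sum_{j\ge 2k}\frac{j!}{(j-2k)!}a_j$ and $P^{(2k-1)}(0)=(2k-1)!\,a_{2k-1}$. One caution: your (correct) computation does not literally reproduce the statement as printed --- the $\log 2$ contribution $-\log 2\sum_j a_j/(j+1)$ equals $-\log 2\int_0^1P(t)\,dt$, not $-\frac{P(1)}{r+1}\log 2$ (the $r$ there is a leftover typo from Lemma \ref{3.1}), and your term $(-1)^k2^{2k}P^{(2k-1)}(0)$ carries a factor $(-1)^k$ that is missing from the bracket in the statement; so rather than asserting that your sums ``yield the bracketed expression,'' you should flag these two discrepancies as typos in the lemma as stated.
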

Then for any $x\in [0,1),$ considering the following functional on $C^{\infty}[0,x]$:
\begin{equation}
C_{x,k_{0}}(f(t))=\int_{0}^{x}f(t)t^{2k_{0}}(1-t)^{2k_{0}}\log\left(\cos\frac{\pi t}{4}\right)dt,
\end{equation}
where $f(t)\in C^{\infty}[0,x]$.
For a rational polynomial $P_{n}(t)=t^{2k_{0}}(1-t)^{2k_{0}}s_{r}(t)$, notice that 
$P_{n}^{(2k)}(0)=P_{n}^{(2k)}(1)=0$ for $0\leq k\leq k_{0}-1$,
 the same reasoning as Theorem \ref{Theorem 1} implies the following result.
 \begin{theorem}\label{Theorem 2}
Let $k_{0}$ and $q$ be positive integers, let $\alpha$ be a real number. For any given integer $n\geq 3$, there exists an integer $r\leq n$ and a rational polynomial $P_{n}(t)=t^{2k_{0}}(1-t)^{2k_{0}}s_{r}(t)$
with degree $4k_{0}+r$ depending on $\alpha,n,q$ and $k_{0}$,
satisfying 
$$\left|\alpha-\sum_{k=k_{0}}^{2k_{0}+\lfloor \frac{r}{2}\rfloor}c_{k}^{\beta}\frac{\beta(2k+2)}{\pi^{2k+1}}-\sum_{k=k_{0}}^{2k_{0}+\lceil \frac{r}{2}\rceil}c_{k}^{\zeta_{E}}\frac{\zeta_{E}(2k+1)}{\pi^{2k}}\right|\ll_{\alpha,q, k_{0}}\frac{1}{n^q}$$
when $r\geq 2$, where $$c_{k}^{\beta}=(-1)^{k} P_{n}^{(2k)}(1) 2^{2k+1}$$
for all $k_{0}\leq k \leq 2k_{0}+\lfloor \frac{r}{2}\rfloor,$
and  $$c_{k}^{\zeta_{E}}=\frac{1}{2}(-1)^{k}P_{n}^{(2k-1)}(1)+P_{n}^{(2k-1)}(0)2^{2k}$$
all $k_{0}\leq k \leq 2k_{0}+\lceil\frac{r}{2}\rceil.$
\end{theorem}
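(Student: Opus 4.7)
The plan is to imitate the proof of Theorem~\ref{Theorem 1} essentially verbatim, replacing $\tan(\pi t)$ on $[0,x]$ (where $x<1/2$ was needed only to avoid the pole at $1/2$) with the function $\log\cos(\pi t/4)$, which is smooth on the entire interval $[0,1]$, and replacing the expansion of Lemma~\ref{2.1} (one multiple cosine value per monomial) with that of Lemma~\ref{3.2} (a linear combination of $\beta$ and $\zeta_E$ values per monomial).

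First I would prove the functional analogue of Lemma~\ref{2.2}: for any real $\alpha$ there exists $f_\alpha\in C^\infty[0,1]$ with
$$\alpha=\int_0^1 f_\alpha(t)\,t^{2k_0}(1-t)^{2k_0}\log\cos(\pi t/4)\,dt.$$
The argument is identical to that of Lemma~\ref{2.2}; the only new point is that on $(0,1]$ the weight $t^{2k_0}(1-t)^{2k_0}\log\cos(\pi t/4)$ is strictly \emph{negative}, so one first treats the case $\alpha<0$ by constructing two smooth bump functions $f_1,f_2$ supported in a small neighborhood of an interior point, with heights chosen so that $C_{1,k_0}(f_1)<\alpha<C_{1,k_0}(f_2)$, and applies the intermediate value theorem to $\lambda\mapsto C_{1,k_0}(\lambda f_1+(1-\lambda)f_2)$. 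The cases $\alpha>0$ and $\alpha=0$ are then handled exactly as in Lemma~\ref{2.2}, by negating $f_\alpha$ or taking $f_\alpha\equiv 0$.

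Given $n\ge 3$, Jackson's theorem on $[0,1]$ produces a polynomial $s_r$ of degree $r\le n$ with $\|f_\alpha-s_r\|_\infty\ll_{\alpha,q} n^{-q}$, and I set $P_n(t)=t^{2k_0}(1-t)^{2k_0}s_r(t)$, of degree $4k_0+r$. The same triangle-inequality estimate as in the proof of Theorem~\ref{Theorem 1} yields
$$\left|\alpha-\int_0^1 P_n(t)\log\cos(\pi t/4)\,dt\right|\ll_{\alpha,q,k_0}\frac{1}{n^q}.$$
It remains to apply Lemma~\ref{3.2} to $P_n$ and read off the coefficients. The essential point, and the only step requiring genuine bookkeeping, is that $P_n$ vanishes to order at least $2k_0$ at both $t=0$ and $t=1$: this kills the $\log 2$ contribution (since $P_n(1)=0$), forces $P_n^{(2k)}(1)=0$ for $k<k_0$, and forces $P_n^{(2k-1)}(0)=P_n^{(2k-1)}(1)=0$ for $k\le k_0$. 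Combined with the degree bound $\deg P_n=4k_0+r$, which bounds the highest nonvanishing derivative, these vanishings truncate the two sums coming from Lemma~\ref{3.2} to the stated ranges $k_0\le k\le 2k_0+\lfloor r/2\rfloor$ and $k_0\le k\le 2k_0+\lceil r/2\rceil$ with the stated coefficients $c_k^\beta$ and $c_k^{\zeta_E}$. I expect no real obstacle beyond this careful matching of vanishing orders to index ranges.
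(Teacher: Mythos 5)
Your proposal follows the paper's own route exactly: the paper's proof of Theorem \ref{Theorem 2} consists of the remark that $P_{n}^{(j)}(0)=P_{n}^{(j)}(1)=0$ for $j<2k_{0}$ together with the phrase ``the same reasoning as Theorem \ref{Theorem 1}'', and you supply precisely the missing details --- the analogue of Lemma \ref{2.2} for the functional $C_{1,k_{0}}$ (correctly noting that the weight $t^{2k_{0}}(1-t)^{2k_{0}}\log\cos(\pi t/4)$ is nonpositive, so the intermediate value argument starts from $\alpha<0$), Jackson's theorem, and the matching of vanishing orders to index ranges. That bookkeeping does come out as stated (the $k=k_{0}$ term of the $\zeta_{E}$ sum is simply zero).

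However, the step you yourself single out as essential --- that $P_{n}(1)=0$ kills the $\log 2$ contribution --- does not actually hold, and this is a genuine gap, albeit one inherited from the paper's Lemma \ref{3.2} rather than introduced by you. By linearity from Lemma \ref{3.1}, the constant contribution to $\int_{0}^{1}P(t)\log\cos(\pi t/4)\,dt$ for $P(t)=\sum_{j}a_{j}t^{j}$ is $-\log 2\sum_{j}a_{j}/(j+1)=-\log 2\int_{0}^{1}P(t)\,dt$, not a multiple of $P(1)$: the expression $-P(1)\log 2/(r+1)$ in Lemma \ref{3.2} (with its undefined $r$) is garbled, as the example $P(t)=t-t^{2}$ shows, where $P(1)=0$ but $\int_{0}^{1}P=1/6$. (The pattern $P(1)\log 2$ is correct in the cotangent case of Lemma \ref{4.2} only because there the per-monomial $\log 2$ coefficient is independent of the exponent.) Since $\int_{0}^{1}P_{n}(t)\,dt\approx\int_{0}^{1}t^{2k_{0}}(1-t)^{2k_{0}}f_{\alpha}(t)\,dt$ has no reason to vanish, a nonnegligible term $-\log 2\int_{0}^{1}P_{n}(t)\,dt$ survives in $\int_{0}^{1}P_{n}(t)\log\cos(\pi t/4)\,dt$, and the displayed inequality of Theorem \ref{Theorem 2}, which contains no $\log 2$ term, cannot be deduced; one must either adjoin the rational multiple $-\int_{0}^{1}P_{n}(t)\,dt$ of $\log 2$ to the approximating combination or constrain $s_{r}$ so that $\int_{0}^{1}P_{n}=0$. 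A smaller, related issue: the sign $(-1)^{k}$ in the last sum of Lemma \ref{3.1} should be $(-1)^{k-1}$ after the re-indexing $r\mapsto r+2$, which changes the relative sign of the two parts of $c_{k}^{\zeta_{E}}$; this too propagates into the coefficients you read off.
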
 
 
 \section{The integral $\int_0^x t^{r}\cot\left(\frac{\pi t}{2}\right)dt$}
In this section, we will consider the cotangent counterpart of section 2.
For any fixed $x\in[0,\frac{1}{2})$, let $$C=\left\{\frac{\log S_r(x)}{\pi}~~\bigg|~~r=1,2,3,\ldots\right\},$$
 we will show that  the real numbers can be approximated by
linear combinations of elements in $C$  with rational coefficients. First, we state the following integral representation which is the cotangent counterpart of Lemma \ref{2.1}.
\begin{lemma}\label{4.1}
For $0\leq x<1$ and $r=1,2,3,\ldots$, we have
\begin{equation}\label{T8}
\int_0^x t^{r}\cot\left(\frac{\pi t}{2}\right)dt=\frac{2^r}{\pi}\log S_{r}\left(\frac{x}{2}\right).
\end{equation}
\end{lemma}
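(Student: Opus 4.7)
My plan is to follow the template of Lemma \ref{2.1}, replacing the cosine/tangent identity \cite[Proposition~3.3]{HK2025} used there by a sine/cotangent analogue that I will extract from Koyama and Kurokawa's formula (\ref{(4)}). The first step is to integrate the left-hand side of (\ref{(4)}) by parts with $u = \log\sin t$ and $dv = t^{r-2}\,dt$. The boundary term at $t=x$ produces $\frac{x^{r-1}}{r-1}\log\sin x$, which cancels the corresponding term on the right-hand side of (\ref{(4)}), while the boundary term at $t=0$ vanishes since $t^{r-1}\log\sin t \to 0$ as $t\to 0^+$ for $r\geq 2$. What is left is the clean identity
$$\int_0^x t^{r-1}\cot t\, dt = \pi^{r-1}\log S_r\!\left(\frac{x}{\pi}\right), \qquad r=2,3,4,\ldots,$$
which is the sine/cotangent counterpart of \cite[Proposition~3.3]{HK2025}.

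Next, in direct parallel with the manipulation shown in the proof of Lemma \ref{2.1}, I would substitute $t = \pi s/2$. This sends $\cot t$ to $\cot(\pi s/2)$, the upper limit $x$ to $y := 2x/\pi$, and $t^{r-1}\,dt$ to $(\pi/2)^r s^{r-1}\,ds$. Using $x/\pi = y/2$, the identity rewrites as
$$\int_0^y s^{r-1}\cot\!\left(\frac{\pi s}{2}\right) ds = \frac{2^r}{\pi}\log S_r\!\left(\frac{y}{2}\right),$$
valid for $r\geq 2$ and $0\leq y<1$. A shift of the exponent (renaming $r-1$ as the new $r$ and adjusting the subscript of $S$ accordingly) then yields the statement (\ref{T8}) for $r=1,2,3,\ldots$.

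The only mild technical point is verifying the vanishing of the boundary term at the origin, for which the standard bound $|\log\sin t|\ll|\log t|$ near zero combined with $t^{r-1}\log t \to 0$ for $r\geq 2$ suffices. Apart from that, the argument is a routine change of variables applied to Koyama and Kurokawa's identity, exactly paralleling the derivation in Lemma \ref{2.1}; no new analytic input is needed beyond (\ref{(4)}).
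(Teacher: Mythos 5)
Your argument is correct and follows essentially the same route as the paper: the identity $\int_0^x t^{r-1}\cot t\,dt=\pi^{r-1}\log S_r\left(\frac{x}{\pi}\right)$ that you recover from (\ref{(4)}) by integration by parts is precisely the statement of \cite[Proposition 2]{KK05} that the paper cites directly, and the change of variables that follows is identical. One caveat applies equally to your writeup and to the paper's own proof: the substitution gives $\int_0^x t^{r-1}\cot\left(\frac{\pi t}{2}\right)dt=\frac{2^{r}}{\pi}\log S_{r}\left(\frac{x}{2}\right)$ for $r\ge2$, and renaming $r-1$ as the new $r$ ``with the subscript adjusted accordingly'' produces $\int_0^x t^{r}\cot\left(\frac{\pi t}{2}\right)dt=\frac{2^{r+1}}{\pi}\log S_{r+1}\left(\frac{x}{2}\right)$, not (\ref{T8}) as printed (compare Lemma \ref{2.1}, where the index correctly shifts to $\C_{r+1}$); your closing phrase ``yields the statement (\ref{T8})'' silently absorbs this off-by-one discrepancy, which one can confirm is a misprint in the displayed formula by checking $r=1$ against $S_1(x)=2\sin(\pi x)$.
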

\begin{proof}
For $0\leq x<1$ and $r=2,3,4,\ldots$, by \cite[Proposition 2]{KK05}, we have
\begin{equation}
\begin{aligned}
\frac{\log S_{r}\left(\frac{x}{2}\right)}{\pi}&=\int_{0}^{\frac{x}{2}}t^{r-1}\cot(\pi t)dt\\
&=\int_{0}^{x}\left(\frac{t}{2}\right)^{r-1}\cot\left(\frac{\pi t}{2}\right)d\left(\frac{t}{2}\right)\\
&=\frac{1}{2^r}\int_0^x t^{r-1}\cot\left(\frac{\pi t}{2}\right)dt,
\end{aligned}
\end{equation}
which is equivalent to (\ref{T8}).
\end{proof} 
Then fixed an $x\in [0,1),$ by considering the following functional on $C^{\infty}[0,x]$:
\begin{equation}
S_{x,k_{0}}(f(t))=\int_{0}^{x}f(t)t^{2k_{0}}(1-t)^{2k_{0}}\cot\left(\frac{\pi t}{2}\right)dt,
\end{equation}
for $f(t)\in C^{\infty}[0,x]$, the same reasoning as Theorem \ref{Theorem 1} implies the following result.
\begin{theorem}\label{Theorem 3}
Fixed an $x\in[0,1)$. Let $k_{0}$ and $q$ be positive integers, let $\alpha$ be a real number. For any given integer $n\geq 3$, there exists an integer $r\leq n$ and a rational polynomial $P_{n}(t)$
with degree $4k_{0}+r$ depending on $\alpha,n,q$ and $x$:
$$P_{n}(t)=a_{2k_{0}}t^{2k_{0}}+\cdots+a_{4k_{0}+r}t^{4k_{0}+r},$$
satisfying 
$$\left|\alpha-\sum_{k=2k_{0}}^{4k_{0}+r}c_{k}\frac{\log S_{k+1}\left(\frac{x}{2}\right)}{\pi}\right|\ll_{\alpha,q, k_{0}, x}\frac{1}{n^q},$$
where $$c_{k}=2^{k}a_{k}.$$
\end{theorem}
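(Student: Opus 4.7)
The plan is to follow verbatim the three-step template used in the proof of Theorem \ref{Theorem 1}, simply replacing the kernel $\tan(\pi t)$ by $\cot(\pi t/2)$ and enlarging the admissible range from $x\in[0,\tfrac12)$ to $x\in[0,1)$. The three ingredients are (i) a representation lemma showing that every real $\alpha$ can be written as $S_{x,k_{0}}(f)$ for some $f\in C^{\infty}[0,x]$; (ii) Jackson's polynomial approximation theorem applied to that $f$; and (iii) Lemma \ref{4.1} to convert the polynomial integral into a linear combination of the values $\log S_{k+1}(x/2)/\pi$.

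First, I would check that the functional $S_{x,k_{0}}$ is a well-defined, bounded linear functional on $C^{\infty}[0,x]$ under the supremum norm. The weight $w(t):=t^{2k_{0}}(1-t)^{2k_{0}}\cot(\pi t/2)$ is strictly positive on $(0,x)$: the factor $(1-t)^{2k_{0}}$ is positive on $[0,1)$, and $\cot(\pi t/2)$ is positive on $(0,1)$. The only singularity, namely $\cot(\pi t/2)\sim 2/(\pi t)$ as $t\to 0^{+}$, is absorbed by $t^{2k_{0}}$ since $k_{0}\geq 1$ forces the integrand to be $O(t^{2k_{0}-1})$ near the origin. Hence $K_{x,k_{0}}:=\int_{0}^{x}w(t)\,dt$ is finite and positive, giving $|S_{x,k_{0}}(f)|\leq K_{x,k_{0}}\sup_{[0,x]}|f|$.

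Second, I would reproduce the construction of Lemma \ref{2.2} line for line. Fix a small $\delta>0$ and build smooth bump functions $f_{1},f_{2}$ supported near $t=x/2$ with plateau values $M$ (large) and $N$ (small); the positivity of $w$ on the compact interval $[x/2-\delta,x/2+\delta]$ yields an infimum $L_{x,k_{0}}>0$, so that $S_{x,k_{0}}(f_{1})\geq 2M\delta L_{x,k_{0}}$ and $S_{x,k_{0}}(f_{2})\leq NK_{x,k_{0}}$. Choosing $N$ small enough and $M$ large enough sandwiches $\alpha>0$, and the intermediate value theorem applied to $g(\lambda):=S_{x,k_{0}}(\lambda f_{1}+(1-\lambda)f_{2})$ yields the desired $f_{\alpha}$; the cases $\alpha<0$ and $\alpha=0$ are dealt with by negation and by the zero function, exactly as before. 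Finally, Jackson's theorem supplies a polynomial $s_{r}$ of degree $r\leq n$ with $\|f_{\alpha}-s_{r}\|_{\infty}\ll_{\alpha,q,x} n^{-q}$, and setting $P_{n}(t)=t^{2k_{0}}(1-t)^{2k_{0}}s_{r}(t)=\sum_{k=2k_{0}}^{4k_{0}+r}a_{k}t^{k}$ we obtain
\[
\left|\alpha-\int_{0}^{x}P_{n}(t)\cot\!\left(\tfrac{\pi t}{2}\right)dt\right|\leq \|f_\alpha-s_r\|_\infty\int_0^x w(t)\,dt\ll_{\alpha,q,k_{0},x}\frac{1}{n^{q}}.
\]
Evaluating the integral term by term via Lemma \ref{4.1} gives $\int_{0}^{x}P_{n}(t)\cot(\pi t/2)\,dt=\sum_{k=2k_{0}}^{4k_{0}+r}a_{k}\cdot 2^{k}\log S_{k+1}(x/2)/\pi$, from which the claimed identification $c_{k}=2^{k}a_{k}$ is immediate.

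The only point deserving care is the analytic input of step one, i.e.\ that the weight $w(t)$ is positive and integrable on $[0,x]$, since here, unlike in Theorem \ref{Theorem 1}, the kernel has a boundary singularity at $t=0$ rather than at the upper endpoint. Once this is verified (and it is precisely why $k_{0}\geq 1$ is required), the remainder of the argument is a straightforward transcription of the proof of Theorem \ref{Theorem 1} and requires no new idea.
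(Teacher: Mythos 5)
Your proposal is correct in substance and takes exactly the route the paper intends: the paper gives no separate argument for Theorem \ref{Theorem 3} beyond introducing the functional $S_{x,k_{0}}$ and asserting that ``the same reasoning as Theorem \ref{Theorem 1}'' applies, and you have supplied precisely that reasoning, including the one genuinely new analytic point (the singularity of $\cot(\pi t/2)$ at $t=0$ is absorbed by $t^{2k_{0}}$ with $k_{0}\geq 1$, so the weight $w$ extends continuously to $[0,x]$, is nonnegative, and has positive integral for $x>0$). The only blemish is in the final bookkeeping: the identity you invoke, $\int_{0}^{x}t^{k}\cot(\pi t/2)\,dt=2^{k}\pi^{-1}\log S_{k+1}(x/2)$, is neither what Lemma \ref{4.1} literally states (which has $S_{k}$, not $S_{k+1}$, on the right) nor what its proof actually yields; re-indexing the computation in that proof gives $\int_{0}^{x}t^{k}\cot\left(\frac{\pi t}{2}\right)dt=\frac{2^{k+1}}{\pi}\log S_{k+1}\left(\frac{x}{2}\right)$ for $k\geq 1$, so the coefficients should be $c_{k}=2^{k+1}a_{k}$ rather than $2^{k}a_{k}$. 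This factor-of-two discrepancy originates in a typo in the displayed conclusion of Lemma \ref{4.1} (it does not match the last line of its own proof) and is inherited by the theorem statement; it has no bearing on the validity of the approximation argument itself.
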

The following integral concerning  the case of $x=1$ was obtained by Lupu and Wu \cite{LW} recently. It can be viewed as a cotangent counterpart of Lemma \ref{3.2}.
\begin{lemma}[{\cite[Lemma 2.3]{LW}}]\label{4.2}
For any polynomial $P(t)\in\mathbb{C}[t]$ with $P(0)=0$ and $\deg P=n$, we have
\begin{equation}
\begin{aligned}
\int_{0}^{1}P(t)\cot\left(\frac{\pi t}{2}\right)dt&=\frac{2P(1)}{\pi}\log 2+2\sum_{k=1}^{\lfloor n/2\rfloor}(-1)^{k}P^{(2k)}(1)\cdot\left(1-\frac{1}{2^{2k}}\right)\\
&\quad\times\frac{\zeta(2k+1)}{\pi^{2k+1}}+2\sum_{k=1}^{\lfloor n/2\rfloor}(-1)^{k}P^{(2k)}(0)\frac{\zeta(2k+1)}{\pi^{2k+1}}.
\end{aligned}
\end{equation}
\end{lemma}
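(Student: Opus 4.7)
The plan is to reduce the integral to a Fourier series, evaluate each Fourier piece exactly by iterated integration by parts, and then recognize the $m$-sums as values of $\zeta(2k+1)$. First, a single integration by parts using $\int\cot(\pi t/2)\,dt = (2/\pi)\log\sin(\pi t/2)$ gives
\[
\int_0^1 P(t)\cot\!\left(\frac{\pi t}{2}\right)dt = \left[\frac{2}{\pi}P(t)\log\sin\frac{\pi t}{2}\right]_0^1 - \frac{2}{\pi}\int_0^1 P'(t)\log\sin\frac{\pi t}{2}\,dt.
\]
The boundary term vanishes at $t=1$ because $\log\sin(\pi/2)=0$, and at $t=0$ because the hypothesis $P(0)=0$ forces $P(t)=O(t)$ while $\log\sin(\pi t/2)=O(\log t)$. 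This is the only place the hypothesis $P(0)=0$ enters.

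Next, I would substitute the classical Fourier series
\[
\log\sin\frac{\pi t}{2} = -\log 2 - \sum_{m=1}^\infty\frac{\cos(\pi m t)}{m},\qquad 0<t<2,
\]
which follows from $\log(2\sin\pi x)=-\sum_{m\geq 1}\cos(2\pi m x)/m$ at $x=t/2$. The $-\log 2$ piece integrates against $-(2/\pi)P'(t)$ to give $(2\log 2/\pi)(P(1)-P(0))=2P(1)\log 2/\pi$, producing the first term of the claimed identity.

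The central computation is to evaluate $I_m := \int_0^1 P'(t)\cos(\pi m t)\,dt$. Two successive integrations by parts, using $\sin(\pi m)=0$ and $\cos(\pi m)=(-1)^m$, yield the reduction
\[
\int_0^1 Q(t)\cos(\pi m t)\,dt = \frac{Q'(1)(-1)^m - Q'(0)}{(\pi m)^2} - \frac{1}{(\pi m)^2}\int_0^1 Q''(t)\cos(\pi m t)\,dt,
\]
which, iterated with $Q=P',P''',\ldots$, terminates after at most $\lfloor n/2\rfloor$ rounds because $P^{(2k)}\equiv 0$ once $2k>n$. The resulting closed form is
\[
I_m = \sum_{k=1}^{\lfloor n/2\rfloor}\frac{(-1)^{k-1}}{(\pi m)^{2k}}\bigl[(-1)^m P^{(2k)}(1) - P^{(2k)}(0)\bigr].
\]
Substituting into the series, swapping the finite $k$-sum past the infinite $m$-sum, and invoking
\[
\sum_{m=1}^\infty\frac{1}{m^{2k+1}}=\zeta(2k+1),\qquad \sum_{m=1}^\infty\frac{(-1)^m}{m^{2k+1}}=-\left(1-\frac{1}{2^{2k}}\right)\zeta(2k+1)
\]
(the latter from $\zeta_E(s)=(1-2^{1-s})\zeta(s)$) makes the $P^{(2k)}(1)$- and $P^{(2k)}(0)$-contributions separate, with matching overall factor $2(-1)^k/\pi^{2k+1}$ and an extra weight $(1-2^{-2k})$ on the former. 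This is exactly the formula of Lemma \ref{4.2}.

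The main technical obstacle will be justifying the termwise integration of the Fourier series against $P'(t)$: the series for $\log\sin(\pi t/2)$ converges only conditionally and has an integrable logarithmic singularity at $t=0$. Since $P'$ is bounded on $[0,1]$ and $\log\sin(\pi t/2)\in L^1[0,1]$, this is handled by a standard truncation argument — integrate first on $[\varepsilon,1]$, where the Fourier partial sums converge uniformly so one may exchange freely, and then let $\varepsilon\to 0^+$ using dominated convergence. Once this exchange is secured, the remainder is the algebraic bookkeeping described above.
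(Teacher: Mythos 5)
Your proof is correct, and it is worth noting that the paper itself gives no proof of this lemma at all: it is imported verbatim from Lupu--Wu \cite[Lemma 2.3]{LW}, and the surrounding text indicates that the intended derivation (both there and in the paper's own parallel Lemma \ref{3.2}) is to start from a closed form for the monomial integrals $\int_0^1 t^r\cot\left(\frac{\pi t}{2}\right)dt$ (Orr's cotangent integral, equation (\ref{(2)}) of the introduction), then extend to a general polynomial $P(t)=\sum_r a_r t^r$ by linearity and repackage the coefficient sums via the identities $P^{(2k)}(1)=\sum_r a_r\,r!/(r-2k)!$ and $P^{(2k)}(0)=(2k)!\,a_{2k}$. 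You instead work with the whole polynomial at once: one integration by parts (where $P(0)=0$ kills the boundary term against the logarithmic singularity), the Fourier expansion $\log\sin\frac{\pi t}{2}=-\log 2-\sum_{m\ge1}\frac{\cos(\pi m t)}{m}$, and an iterated integration-by-parts reduction of $\int_0^1 P'(t)\cos(\pi m t)\,dt$ that terminates at $k=\lfloor n/2\rfloor$; the $m$-sums then produce $\zeta(2k+1)$ and $-(1-2^{-2k})\zeta(2k+1)$ exactly as claimed. I checked the signs and constants and they all match, and your handling of the termwise integration (uniform convergence on $[\varepsilon,1]$ by Dirichlet's test plus the uniform $O(1/m^2)$ bound that lets $\varepsilon\to0$) is sound. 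The trade-off: your route is self-contained and in effect re-proves Orr's formula as a by-product, while the cited route is shorter if one is willing to take the monomial evaluation as known.
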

If considering the following functional on $C^{\infty}[0,1]$:
\begin{equation}
\begin{aligned}
S_{k_{0}}(f(t))&=\int_{0}^{1}f(t)t^{2k_{0}}(1-t)^{2k_{0}}\cot\left(\frac{\pi t}{2}\right)dt\\
&=\lim_{x\to 1}\int_{0}^{x}f(t)t^{2k_{0}}(1-t)^{2k_{0}}\cot\left(\frac{\pi t}{2}\right)dt,
\end{aligned}
\end{equation}
then Lemma \ref{4.2} and the same reasoning as Theorem \ref{Theorem 2} recover Lupu and Wu's result (see Theorem \ref{Theorem 4} above).

\end{document}